\documentclass[11pt]{article}

\usepackage[margin=1.15in]{geometry}
\usepackage{amsmath,amssymb,amsthm}
\usepackage{mathtools}
\usepackage{booktabs}
\usepackage[colorlinks=true,linkcolor=blue,citecolor=blue,urlcolor=red]{hyperref}

\newtheorem{theorem}{Theorem}
\newtheorem{lemma}[theorem]{Lemma}

\theoremstyle{definition}
\newtheorem{conjecture}[theorem]{Conjecture}
\theoremstyle{remark}
\newtheorem{remark}[theorem]{Remark}

\newcommand{\R}{\mathbb{R}}

\newcommand{\Sn}{\mathfrak{S}_n}
\newcommand{\norm}[1]{\left\lVert #1 \right\rVert}

\newcommand{\Wss}{W_{\mathrm{SS}}}
\newcommand{\Wrs}{W_{\mathrm{RS}}}
\newcommand{\Wgd}{W_{\mathrm{GD}}}

\DeclareMathOperator*{\lammax}{\lambda_{\max}}
\newcommand{\psdle}{\preceq}
\newcommand{\psdge}{\succeq}

\title{A Resolution of the SS--RS--GD Inequalities}

\author{Binghui Peng\footnote{\texttt{binghuip@umd.edu}}}
\date{}

\begin{document}
\maketitle

\begin{abstract}
Yun, Sra, and Jadbabaie (COLT 2021, open question) conjectured the \emph{SS--RS--GD inequalities}: for well-conditioned symmetric matrices $A_1,\dots,A_n$, the operators $\Wss$, $\Wrs$, and $\Wgd$ that
encode the expected iterate of single-shuffle SGD, random-reshuffle SGD, and gradient descent on a
quadratic finite sum should satisfy
\[
  \norm{\Wss}\le\norm{\Wrs}\le\norm{\Wgd}.
\]
We resolve this conjecture.
\begin{itemize}
  \item \textbf{SS-RS inequality fails.} Already for $n=3$, $K=2$, and $d=4$, we exhibit explicit
  PSD matrices whose condition number is arbitrarily close to $1$, yet
  $\norm{\Wss}>\norm{\Wrs}$. 

  \item \textbf{RS-GD holds inequality holds.} For every symmetric $A_i$ with $\bigl(1-\frac1{4n^2+1}\bigr)I\psdle A_i\psdle I$, one has
  $\norm{\Wrs}\le\norm{\Wgd}$. 
\end{itemize}

The proof was found via GPT-5.5 Pro (extended), prompted by the author.
\end{abstract}

\section{Introduction}\label{sec:intro}

The finite-sum objective $\min_z\frac1n\sum_{i=1}^n f_i(z)$ at the heart of empirical risk
minimization is optimized in practice by one of three schemes that differ only in the order they sweep
the $n$ components each epoch. \emph{Gradient descent} (GD) uses the full batch at every step;
\textsc{RandomShuffle} SGD draws a fresh random permutation of the components every epoch; and
\textsc{SingleShuffle} SGD draws a single permutation at the outset and reuses it for all $K$ epochs.
These variants have by now an extensive theory
\cite{HaochenSra19,Nagaraj19,Rajput20,Ahn20,Mishchenko20,SafranShamir20}. Yun, Sra, and
Jadbabaie~\cite{YSJ21open} set out to make the ordering precise as a COLT open problem.  Writing
$P_\sigma:=A_{\sigma(1)}A_{\sigma(2)}\cdots A_{\sigma(n)}$ for the ordered product of symmetric matrices
$A_1,\dots,A_n$ along a permutation $\sigma\in\Sn$, the expected iterate after $K$ epochs of the three
schemes is governed by the operators
\begin{equation}\label{eq:W}
\Wss \;:=\; \frac1{n!}\sum_{\sigma\in\Sn} P_\sigma^{\,K},
\qquad
\Wrs \;:=\; \Bigl(\frac1{n!}\sum_{\sigma\in\Sn} P_\sigma\Bigr)^{\!K},
\qquad
\Wgd \;:=\; \Bigl(\frac1n\sum_{i=1}^n A_i\Bigr)^{\!nK}
\end{equation}
Note single-shuffle reuses one permutation for all $K$ epochs, so its $K$-th power sits \emph{inside} the
average; reshuffling draws a fresh permutation each epoch, with the power \emph{outside}; and $\Wgd$ is
full-batch gradient descent. Their conjecture is the following.

\begin{conjecture}[Yun--Sra--Jadbabaie~\cite{YSJ21open}, SS--RS--GD
inequalities]\label{conj:main}
For every $n\ge2$, $K\ge1$, and $d\ge1$ there exists a constant $\eta_{n,K}\in(0,1]$ such that the
following holds. If real symmetric $d\times d$ matrices $A_1,\dots,A_n$ satisfy
$(1-\eta_{n,K})I\psdle A_i\psdle I$ for all $i$, then
\begin{equation}\label{eq:chain}
\norm{\Wss}\;\le\;\norm{\Wrs}\;\le\;\norm{\Wgd}.
\end{equation}
\end{conjecture}
The conjecture requires the matrices to be well-conditioned; this is necessary as the RS-GD inequality for general PSD matrices (originally conjectured by Recht and R\'e~\cite{RechtRe12}) is \emph{false} for all $n\ge5$ \cite{LaiLim20,DeSa20}.

\paragraph{Our results.} The COLT open question is resolved, in particular, the SS-RS inequality fails while the RS-GD inequality stands.

\begin{theorem}[Refutation; SS-RS inequality fails]\label{thm:refute}
Conjecture~\ref{conj:main} is false. Concretely, fix $n=3$ and $K=2$. For every candidate
$\eta_{3,2}\in(0,1]$ there exist real symmetric positive definite $4\times4$ matrices
$A_1,A_2,A_3$ with $(1-\eta_{3,2})I\psdle A_i\psdle I$ for which
\[
\norm{\Wss}\;>\;\norm{\Wrs}.
\]
\end{theorem}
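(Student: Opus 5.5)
The natural approach is a perturbation around the identity. Write $A_i = I - \varepsilon B_i$ with fixed symmetric $B_i\psdge 0$, $\norm{B_i}\le 1$, and let $\varepsilon\to0$. Then $(1-\varepsilon)I\psdle A_i\psdle I$, which settles the conditioning requirement for every candidate $\eta_{3,2}$ once $\varepsilon\le\eta_{3,2}$. I would expand $\Wss$ and $\Wrs$ in powers of $\varepsilon$ and look for the first order at which their norms can differ. With $S=\sum_i B_i$ we have $P_\sigma = I-\varepsilon S+\varepsilon^2 Q_\sigma - \varepsilon^3 C_\sigma+\cdots$, where $Q_\sigma=\sum_{i<j}B_{\sigma(i)}B_{\sigma(j)}$ is the ordered pair sum. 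Averaging $Q_\sigma$ over $\mathfrak S_3$ gives $\bar Q=\tfrac12\sum_{i\ne j}B_iB_j$, and averaging $C_\sigma$ gives the symmetrized triple product $\bar C$.

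For $K=2$ the difference is
\[
\Wss-\Wrs=\frac1{n!}\sum_\sigma P_\sigma^2-\bar P^2=\frac1{n!}\sum_\sigma (P_\sigma-\bar P)^2 .
\]
This is the ``variance'' of the epoch operator, and because $P_\sigma$ is not symmetric it need not be PSD. Writing $P_\sigma-\bar P=\varepsilon^2 D_\sigma+\varepsilon^3 E_\sigma+\cdots$ with $D_\sigma=Q_\sigma-\bar Q$, we get $\Wss-\Wrs=\varepsilon^4\,\frac1{6}\sum_\sigma D_\sigma^2+O(\varepsilon^5)$. The main term is
\[
\norm{\Wrs}=1-2\varepsilon\lambda_{\min}(S)+O(\varepsilon^2).
\]
To compare norms, choose the $B_i$ so that $S$ has a degenerate smallest eigenvalue, most simply $\lambda_{\min}(S)=0$ on a subspace $V$; in $d=4$, take $V$ two-dimensional. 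The norm is then governed, through degenerate perturbation theory, by the compression of the higher-order terms to $V$ (after completing the Schur complement). The aim is to arrange that the compressed correction $\Pi_V(\Wss-\Wrs)\Pi_V$ is not negative semidefinite, since $D_\sigma^2$ has no sign, and that it dominates at the relevant order, so that $\norm{\Wss}-\norm{\Wrs}>0$ for all small $\varepsilon$.

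One simplification helps here. If $\lambda_{\min}(S)=0$ with $B_i\psdge0$, then every $B_i$ annihilates $V$, and all products vanish on $V$, which is too degenerate. So instead I would allow $\lambda_{\min}(S)=s>0$ with the eigenspace $V$ not annihilated by the individual $B_i$. Alternatively, I would use the singular-value formulation $\norm{W}^2=\lammax(W^\top W)$ and compute the leading nonsymmetric correction. Concretely, I expect the cleanest route is to pick explicit low-rank $B_i$, such as rank-one projections onto unit vectors $u_1,u_2,u_3$ in $\R^4$ chosen with some rational entries. With those, $\norm{\Wss}^2-\norm{\Wrs}^2$ becomes an explicit polynomial in $\varepsilon$ whose lowest-order nonzero coefficient can be checked to be positive, uniformly for $\varepsilon\in(0,\varepsilon_0)$.

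The key steps, in order, are these:
\begin{enumerate}
\item Set up the $\varepsilon$-expansion and the variance identity above.
\item Reduce the norm comparison to the top singular direction of $\Wrs$ via first-order perturbation of a simple or degenerate top singular value. The difference in $\norm{\cdot}$ then equals $v^\top(\Wss-\Wrs)w$ plus higher order, where $v,w$ are the top singular vectors, or the maximum of that form over the degenerate subspace.
\item Exhibit explicit $B_i$ making this leading coefficient positive.
\item Control the remainder rigorously. Since everything is polynomial in $\varepsilon$, an exact symbolic computation of the characteristic polynomials of $\Wss^\top\Wss$ and $\Wrs^\top\Wrs$ near the top eigenvalue suffices.
\end{enumerate}

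The main obstacle is step 3 together with the order bookkeeping. The difference $\Wss-\Wrs$ is $O(\varepsilon^4)$, while the spectral gap of $\Wrs$ near its top is only $O(\varepsilon)$. A naive bound therefore leaves cross terms between the $\varepsilon^4$ perturbation and the nonnormality of $\Wrs$ at order $\varepsilon^2$ and $\varepsilon^3$, and these must be tracked. I would first try making $\Wrs$ nearly normal on the top direction, for instance choosing the $u_i$ so that $\bar Q$ and $\bar C$ commute with $S$ on $V$. That makes the leading difference exactly $v^\top\bigl(\tfrac16\sum_\sigma D_\sigma^2\bigr)v$, which I would then make positive by choosing $v$ in the range where the cyclic order asymmetry of the $u_i$ gives $D_\sigma^2$ a positive Rayleigh quotient. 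If that fails, the fallback is a direct exact computation for a specific numeric family and a sign check of the polynomial in $\varepsilon$.
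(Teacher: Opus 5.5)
There is a genuine gap at your step~3. Your $D_\sigma=Q_\sigma-\bar Q$ is skew-symmetric. Indeed $Q_\sigma^\top=Q_{\sigma^{\mathrm{rev}}}$ and $Q_\sigma+Q_{\sigma^{\mathrm{rev}}}=\sum_{i\ne j}B_iB_j=2\bar Q$, so $D_\sigma=\frac12(Q_\sigma-Q_\sigma^\top)=\frac12\sum_{i<j}[B_{\sigma(i)},B_{\sigma(j)}]$. Hence $D_\sigma^2=-D_\sigma^\top D_\sigma\psdle0$, and the $\varepsilon^4$ coefficient $\frac16\sum_\sigma D_\sigma^2$ of $\Wss-\Wrs$ is negative semidefinite, not sign-indefinite as you assert. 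Its Rayleigh quotient $-\frac16\sum_\sigma\norm{D_\sigma v}^2$ is $\le0$ for every $v$, whether the top direction is simple or degenerate. Weyl's inequality then gives $\norm{\Wss}\le\norm{\Wrs}+O(\varepsilon^5)$, so at leading order reshuffling always wins.

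A violation must therefore come from higher order. It can only occur if some near-top eigenvector $v$ of $\Wrs$ is, at least asymptotically, annihilated by every $D_\sigma$. For $n=3$ the $D_\sigma$ span all commutators $[B_i,B_j]$, so this means $[B_i,B_j]v=0$ for all $i,j$. Nothing in your plan produces such a configuration.

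Your concrete candidate also fails outright. With $A_i=I-\varepsilon u_iu_i^\top$ for three vectors in $\R^4$, any unit $w\perp u_1,u_2,u_3$ is fixed by every $A_i$, so $\norm{\Wss}=\norm{\Wrs}=1$ exactly. It also has $\lambda_{\min}(S)=0$, contradicting your own requirement $s>0$. Your nonnormality concerns are moot too, since $\Wss$ and $\Wrs$ are symmetric (Lemma~\ref{lem:sym}).

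The missing idea is a construction that puts the top eigenvector into the common kernel of the commutators. The paper's tensor square does this: $A_i=B_i\otimes B_i$ with $B_i=qI_2+(1-q)u_iu_i^\top$ for three equiangular unit vectors $u_i\in\R^2$. Write $\varepsilon=1-q$ and $P_i^\perp=I_2-u_iu_i^\top$. Then $A_i=I-\varepsilon(P_i^\perp\otimes I_2+I_2\otimes P_i^\perp)+\varepsilon^2P_i^\perp\otimes P_i^\perp$; the $\varepsilon^2$ term does not affect $D_\sigma$.

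\begin{itemize}
\item The first-order coefficients sum to $3I_4$, because $\sum_iu_iu_i^\top=\frac32I_2$.
\item Their commutators act on $\mathrm{vec}(M)$ as $M\mapsto[J,M]$ with $J=[P_a^\perp,P_b^\perp]$ skew, so they kill $\mathrm{vec}(I_2)$.
\item The dihedral symmetry makes $\mathrm{vec}(I_2)$ an exact eigenvector, and it carries $\lambda_{\mathrm{RS}}=\norm{\Wrs}$.
\end{itemize}

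This perturbative reading is my explanation of why the construction works; the paper itself does not argue this way. It computes both spectra exactly and obtains $\lambda_{\mathrm{SS}}-\lambda_{\mathrm{RS}}=(1-q)^6(q+1)^2g(q)/2048>0$ near $q=1$. It then certifies $\norm{\Wrs}=\lambda_{\mathrm{RS}}$ by sum-of-squares identities. A gap of order $\varepsilon^6$ is consistent with the obstruction above.

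Your fallback of an exact computation for a specific family is the right final step. But finding the family is the whole content of the proof, and the one you propose cannot work.
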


\begin{theorem}[RS-GD inequality holds]\label{thm:rsgd}
For every $n\ge2$, every $K\ge1$, every $d\ge1$, and every real symmetric $A_1,\dots,A_n$ with
\[
\Bigl(1-\frac1{4n^2+1}\Bigr)I\;\psdle\;A_i\;\psdle\;I\qquad(1\le i\le n),
\]
one has $\norm{\Wrs}\le\norm{\Wgd}$. Thus the second inequality of~\eqref{eq:chain} holds with the constant $\eta_{n,K}^{\mathrm{RS\text{-}GD}}=\tfrac1{4n^2+1}$.
\end{theorem}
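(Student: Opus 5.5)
We write $A_i = I - \epsilon B_i$ with $0\psdle B_i\psdle I$ and $\epsilon\le \frac1{4n^2+1}$. Set $M:=\frac1n\sum_i B_i$, so $\Wgd=(I-\epsilon M)^{nK}$. Since $\Wgd$ is PSD with all eigenvalues in $(0,1]$, we have $\norm{\Wgd}=(1-\epsilon\lambda_{\min}(M))^{nK}$. Both sides are $K$-th powers, $\Wrs=R^K$ with $R:=\frac1{n!}\sum_\sigma P_\sigma$, and $\norm{R^K}\le\norm{R}^K$. It therefore suffices to prove the one-epoch inequality
\[
\norm{R}\;\le\;(1-\epsilon\mu)^n,\qquad \mu:=\lambda_{\min}(M).
\]

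\textbf{Expansion of $R$.} First, $R$ is symmetric, because reversing $\sigma$ transposes $P_\sigma$. Expanding the product gives
\[
R=\sum_{k=0}^n(-\epsilon)^k\, e_k^{\mathrm{sym}},
\]
where $e_k^{\mathrm{sym}}$ is the average over permutations of ordered $k$-fold products of distinct $B_i$'s. The first terms are $I-\epsilon nM+\epsilon^2 S_2-\cdots$, with $S_2=\frac12\sum_{i\ne j}B_iB_j$ symmetrized. In commutative form $S_2=\binom n2 M^2-\frac12\sum_i\bigl(B_i^2-M^2\bigr)\cdot$(a combinatorial factor). We compare this with
\[
(I-\epsilon M)^n=I-\epsilon nM+\epsilon^2\binom n2M^2-\cdots.
\]
The key structural point is that the difference $D:=R-(I-\epsilon M)^n$ starts at order $\epsilon^2$. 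Its leading term is $-c_n\epsilon^2\sum_i (B_i-M)^2\psdle0$ for an explicit $c_n>0$, obtained from $\sum_{i\ne j}B_iB_j=n^2M^2-\sum_iB_i^2$ symmetrized. So to second order RS is \emph{more} contractive than GD. The remaining terms are $O(\epsilon^3)$, with coefficients bounded by binomial counts, since $\norm{B_i}\le1$.

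\textbf{Bounding the norm.} We need two-sided control, $-(1-\epsilon\mu)^nI\psdle R\psdle(1-\epsilon\mu)^nI$. The lower bound is easy: $\norm{R-I}\le(1+\epsilon)^n-1$ is small, so $R\psdge0$. For the upper bound, fix a unit vector $x$ and write $v:=x^\top Mx\ge\mu$ and $s:=\sum_i x^\top(B_i-M)^2x\ge0$. We aim to show
\[
x^\top Rx\le 1-n\epsilon v+\binom n2\epsilon^2 v^2-c_n\epsilon^2 s+(\text{cubic error})\le(1-\epsilon v)^n\le(1-\epsilon\mu)^n.
\]
The main obstacle is the cubic and higher error. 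It is not bounded by $s$ alone, because there are terms like $\epsilon^3 x^\top B_iB_jB_kx$ that do not vanish when all $B_i$ are equal. Those are exactly matched by the corresponding terms of $(I-\epsilon M)^n$, however.

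The plan for this step is to write each $B_i=M+\Delta_i$ with $\sum_i\Delta_i=0$ and expand $R-(I-\epsilon M)^n$ in the $\Delta_i$. The terms linear in $\Delta$ vanish after symmetrization, using $\sum_i\Delta_i=0$ and the permutation average. Every surviving term therefore contains at least two $\Delta$'s. Terms with exactly two $\Delta$'s give $-c_n\epsilon^2\sum_i\Delta_i^2$ plus $O(n^3\epsilon^3)$ multiples of quadratic forms. Cauchy--Schwarz bounds these by $\epsilon^3 n^{O(1)}\,\|\Delta x\|^2$-type quantities, which are absorbed into $c_n\epsilon^2 s$ once $\epsilon\lesssim 1/n^2$. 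This is where the threshold $\frac1{4n^2+1}$ should come from, since the ratio of consecutive terms is roughly $n^2\epsilon$.

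A cleaner route may be to prove the epoch inequality by induction on $n$. Condition on the last index $\sigma(n)=j$, so that $R=\frac1n\sum_j R^{(-j)}A_j$, symmetrized. Then use a matrix inequality of the form $\frac12(XA+AX)\psdle\ldots$ valid for near-identity $X,A$. We would try the direct $\Delta$-expansion first, keeping all bounds explicit in order to track the constant $4n^2+1$.
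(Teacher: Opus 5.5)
There is a genuine gap, and it is at the step you identify as the main obstacle. Your upper-bound plan absorbs all of $D=R-(I-\epsilon M)^n$ into $-c_n\epsilon^2 s$ for every unit $x$. If it worked, it would prove the Loewner comparison $R\psdle G^n$. That comparison is false for arbitrarily well-conditioned inputs.

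Take $n=3$, $d=2$, $G=I-\epsilon M$ with $M=\begin{pmatrix}1/2&1/10\\1/10&1/2\end{pmatrix}$, and $Y=\frac{\epsilon}{10}e_2e_2^\top$. Set $A_1=G+Y$, $A_2=G-Y$, $A_3=G$. Then $B_1,B_2,B_3$ are $M-\frac1{10}e_2e_2^\top$, $M+\frac1{10}e_2e_2^\top$, $M$, and all lie in $[0,I]$. Expanding the six products directly (the linear terms cancel since $\sum_iY_i=0$, and there is no cubic term since $Y_3=0$) gives
\[
R-G^3=-\tfrac13\bigl(Y^2G+YGY+GY^2\bigr)=-\frac{\epsilon^2}{300}\begin{pmatrix}0&g_{12}\\ g_{12}&3g_{22}\end{pmatrix},\qquad g_{12}=-\frac{\epsilon}{10}\neq0 .
\]
The inner matrix has negative determinant, so $R-G^3$ has a positive eigenvalue, of order $\epsilon^4$, for every $\epsilon>0$.

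The step that breaks is your Cauchy--Schwarz absorption. Expanding around $G^n$ produces two-$\Delta$ words such as $M\Delta_i^2$ and $\Delta_i^2M$, whose end factor is $M$ rather than a $\Delta$. For these you only get $|x^\top M\Delta_i^2x|\le\norm{\Delta_iMx}\,\norm{\Delta_ix}$, and $\norm{\Delta_iMx}$ is not controlled by $s=\sum_i\norm{\Delta_ix}^2$. In the example, $\Delta_ie_1=0$ but $\Delta_iMe_1\neq0$.

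Separately, your intermediate target $(1-\epsilon v)^n$ with $v=x^\top Mx$ is wrong even when all $A_i$ coincide. Then $R=G^n$, and Jensen gives $x^\top G^nx\ge(1-\epsilon v)^n$, strictly unless $x$ is an eigenvector of $M$. Equivalently, your second-order term should be $\binom n2\epsilon^2x^\top M^2x$, which is $\ge\binom n2\epsilon^2v^2$, not $\le$.

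The missing idea is to compare $R$ directly with the scalar $\rho^nI$, where $\rho=\norm{G}=1-\epsilon\mu$, rather than passing through the operator $G^n$. This keeps the first-order slack that subtracting $G^n$ throws away. The paper normalizes $C_i=\rho^{-1}A_i=I+X_i$ and expands around $I$. Two things then happen:
\begin{enumerate}
\item The first-order term $T=-\sum_iX_i$ is PSD, which is just $\rho^{-1}G\psdle I$. It absorbs $\frac12T^2$ via $T^2\psdle\norm{T}\,T$.
\item Every word of length $k\ge3$ begins and ends with some $X_i$. End-factor Cauchy--Schwarz therefore bounds it by $\sum_i\norm{X_iv}^2$, which the $\frac12Q$ term absorbs once $\delta\le\frac1{4n^2}$.
\end{enumerate}
In the example above, this first-order slack $\rho^3-x^\top G^3x$ is of order $\epsilon$ in the bad direction near $e_1$, which dwarfs the $\epsilon^4$ violation of $R\psdle G^3$. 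Your lower bound ($R\psdge0$ from $\norm{R-I}$ small) and your reduction to a single epoch are fine.
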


\paragraph{On the role of AI} The proof idea was completely generated by GPT 5.5 Pro (extended)\footnote{\url{https://chatgpt.com/share/6a316ae2-e5c8-83ea-bac1-5773edeed669}} while the write-up was assembled by Claude Code. The author prompted GPT, verified its proof, and polished the manuscript for readability.

\section{Setup and notation}\label{sec:setup}

\paragraph{Notation} $I$ is the identity (of the
dimension clear from context), $\psdle$ is the L\"owner order, and $\norm{\cdot}$ is the spectral
norm (largest singular value), which equals $\max_i|\lambda_i(\cdot)|$ for symmetric matrices.
Recall from~\eqref{eq:W} the products $P_\sigma=A_{\sigma(1)}A_{\sigma(2)}\cdots A_{\sigma(n)}$ and the
operators $\Wss,\Wrs,\Wgd$. It is convenient to set
\[
R \;:=\; \frac1{n!}\sum_{\sigma\in\Sn} P_\sigma, \qquad
G \;:=\; \frac1n\sum_{i=1}^n A_i,
\]
so that $\Wss=\frac1{n!}\sum_{\sigma\in\Sn}P_\sigma^{\,K}$, $\Wrs=R^K$, and $\Wgd=G^{nK}$.

\begin{lemma}[Symmetry]\label{lem:sym}
Let $A_1,\dots,A_n$ be real symmetric. For every $\sigma\in\Sn$,
\[
P_\sigma^\top=A_{\sigma(n)}\cdots A_{\sigma(1)}=P_{\sigma^{\mathrm{rev}}},
\qquad \sigma^{\mathrm{rev}}(j):=\sigma(n+1-j).
\]
Consequently:
\begin{enumerate}
\item $R$ and $\Wss$ are symmetric, even though the individual products $P_\sigma$ need not be;
\item $G$, $\Wrs=R^K$, and $\Wgd=G^{nK}$ are symmetric, and---an even power of a symmetric matrix being
positive semidefinite---$\Wrs\psdge0$ when $K$ is even and $\Wgd\psdge0$ when $nK$ is even.
\end{enumerate}
\end{lemma}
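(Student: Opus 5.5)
The plan is to prove the transpose identity directly and then read off both consequences from it. Transposition reverses the order of a product, $(XY)^\top=Y^\top X^\top$, and each $A_i$ satisfies $A_i^\top=A_i$. Applying this to $P_\sigma=A_{\sigma(1)}\cdots A_{\sigma(n)}$ gives
\[
P_\sigma^\top=A_{\sigma(n)}^\top\cdots A_{\sigma(1)}^\top=A_{\sigma(n)}\cdots A_{\sigma(1)}.
\]
The $j$-th factor of this product is $A_{\sigma(n+1-j)}=A_{\sigma^{\mathrm{rev}}(j)}$, so the product is exactly $P_{\sigma^{\mathrm{rev}}}$. A short induction on the number of factors makes the reversal rule rigorous.

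For item 1, the key observation is that the map $\sigma\mapsto\sigma^{\mathrm{rev}}=\sigma\circ w_0$, where $w_0(j)=n+1-j$, is a bijection of $\Sn$; indeed it is an involution. Hence
\[
R^\top=\frac1{n!}\sum_{\sigma\in\Sn}P_{\sigma^{\mathrm{rev}}}=R,
\]
by reindexing the sum. For $\Wss$, the same reindexing works once we note that $(P_\sigma^K)^\top=(P_\sigma^\top)^K=P_{\sigma^{\mathrm{rev}}}^K$.

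For item 2, $G$ is a sum of symmetric matrices and is therefore symmetric. Powers of a symmetric matrix are symmetric, so $\Wrs=R^K$ and $\Wgd=G^{nK}$ are symmetric. If $M$ is symmetric and $m=2\ell$ is even, then
\[
x^\top M^{m}x=\norm{M^\ell x}^2\ge0,
\]
which can equivalently be seen from the fact that the eigenvalues of $M^m$ are $\lambda_i^m\ge0$. Applying this with $M=R$, $m=K$ gives $\Wrs\psdge0$ for even $K$, and with $M=G$, $m=nK$ gives $\Wgd\psdge0$ for even $nK$.

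There is no real obstacle. The only point needing care is checking that reversal is a bijection on $\Sn$, so that the reindexing of the averages is legitimate.
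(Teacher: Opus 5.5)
Your proposal is correct and follows essentially the same route as the paper. Both arguments use the transpose--reversal identity, reindex the averages via the bijection $\sigma\mapsto\sigma^{\mathrm{rev}}$ (with $(P_\sigma^K)^\top=P_{\sigma^{\mathrm{rev}}}^K$ for $\Wss$), and show even powers are PSD via $M^{2\ell}=(M^\ell)^\top M^\ell\psdge0$.
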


\begin{proof}
Since each $A_i=A_i^\top$, we have $P_\sigma^\top=(A_{\sigma(1)}\cdots A_{\sigma(n)})^\top
=A_{\sigma(n)}\cdots A_{\sigma(1)}=P_{\sigma^{\mathrm{rev}}}$. Reversal $\sigma\mapsto\sigma^{\mathrm{rev}}$
is a bijection of $\Sn$, so $R^\top=\frac1{n!}\sum_\sigma P_{\sigma^{\mathrm{rev}}}=R$ and, using
$(P_\sigma^K)^\top=(P_\sigma^\top)^K=P_{\sigma^{\mathrm{rev}}}^K$, also $\Wss^\top=\Wss$. The average $G$
is symmetric, and $\Wrs=R^K$, $\Wgd=G^{nK}$ are powers of symmetric matrices, hence symmetric. Finally,
for symmetric $M$ and $m\ge1$, $M^{2m}=(M^m)^2\psdge0$.
\end{proof}

\section{Refutation of SS-RS inequality}\label{sec:refute}

\subsection{The construction}

The starting point is the appendix observation of~\cite{YSJ21open} that the first inequality already
fails for rank-one $2\times2$ matrices at $\eta=1$. We push that failure to the near-identity regime by
a tensor square. Let $u_1,u_2,u_3\in\R^2$ be the three unit vectors 
\[
u_1=\begin{pmatrix}1\\[2pt]0\end{pmatrix},\qquad
u_2=\frac12\begin{pmatrix}1\\[2pt]\sqrt3\end{pmatrix},\qquad
u_3=\frac12\begin{pmatrix}-1\\[2pt]\sqrt3\end{pmatrix},
\]
and let $P_i=u_iu_i^\top$ be the corresponding rank-one orthogonal projectors:
\begin{equation}\label{eq:proj}
P_1=\begin{pmatrix}1&0\\0&0\end{pmatrix},\quad
P_2=\begin{pmatrix}\frac14&\frac{\sqrt3}4\\[2pt]\frac{\sqrt3}4&\frac34\end{pmatrix},\quad
P_3=\begin{pmatrix}\frac14&-\frac{\sqrt3}4\\[2pt]-\frac{\sqrt3}4&\frac34\end{pmatrix}.
\end{equation}
Each $P_i$ is symmetric, idempotent, of trace $1$. Fix a parameter $q\in(0,1)$ and set
\begin{equation}\label{eq:AB}
B_i \;:=\; qI_2+(1-q)P_i,
\qquad
A_i \;:=\; B_i\otimes B_i \in \R^{4\times4}.
\end{equation}
Since $P_i$ has eigenvalues $1$ and $0$, each $B_i$ has eigenvalues $1$ and $q$, so $B_i$ is positive
definite with $qI_2\psdle B_i\psdle I_2$. Hence $A_i=B_i\otimes B_i$ is symmetric positive definite
with eigenvalues $\{1,q,q,q^2\}$, giving
\begin{equation}\label{eq:cond}
q^2 I_4\;\psdle\;A_i\;\psdle\;I_4.
\end{equation}
Given any proposed $\eta_{3,2}\in(0,1]$, choose $q<1$ close enough to $1$ that $1-q^2<\eta_{3,2}$;
then $(1-\eta_{3,2})I_4\psdle A_i\psdle I_4$, so the $A_i$ satisfy the hypothesis of
Conjecture~\ref{conj:main}. 

\subsection{Spectral norms of \texorpdfstring{$\Wss$}{WSS} and \texorpdfstring{$\Wrs$}{WRS}}

With $n=3$ and $K=2$ we compare
\[
\Wss=\frac16\sum_{\sigma\in\mathfrak S_3}P_\sigma^2,
\qquad
\Wrs=R^2
\]

where
\[
R=\frac16\sum_{\sigma\in\mathfrak S_3}P_\sigma,
\qquad
P_\sigma=A_{\sigma(1)}A_{\sigma(2)}A_{\sigma(3)} .
\]
Both matrices are symmetric (Lemma~\ref{lem:sym}); their eigenvalues are explicit polynomials in $q$,
recorded in the next lemma. Every polynomial identity stated below is exact and is verified by the
symbolic-algebra script of Appendix~\ref{app:verify}.

\begin{lemma}[Eigenvalues]\label{lem:eig}
The symmetric matrix $\Wrs$ has eigenvalues
\[
(\lambda_{\mathrm{RS}}(q), \mu_{\mathrm{RS}}(q), \mu_{\mathrm{RS}}(q), q^6),
\]
and the symmetric matrix $\Wss$ has eigenvalues 
\[
(\lambda_{\mathrm{SS}}(q),  
\mu_{\mathrm{SS}}(q), \mu_{\mathrm{SS}}(q),  q^6),
\]
where
\begin{align*}
\lambda_{\mathrm{RS}}(q)&= \Bigl(\frac{q^6-6q^5+15q^4+12q^3+15q^2-6q+1}{32}\Bigr)^{\!2},\\
\mu_{\mathrm{RS}}(q)&=\frac{(q+1)^4(q^4+4q^3-42q^2+4q+1)^2}{16384},\\[2pt]
\lambda_{\mathrm{SS}}(q)&=\frac{1}{2048}\bigl(q^{12}-24q^{11}+186q^{10}-504q^9+399q^8+528q^7+876q^6\\
&\hphantom{{}=\frac{1}{2048}\bigl(}{}+528q^5+399q^4-504q^3+186q^2-24q+1\bigr),\\[2pt]
\mu_{\mathrm{SS}}(q)&=-\frac1{8192}\bigl(q^{12}-12q^{11}-78q^{10}+1188q^9-2001q^8-1176q^7-4036q^6\\
&\hphantom{{}=-\frac1{8192}\bigl(}{}-1176q^5-2001q^4+1188q^3-78q^2-12q+1\bigr).
\end{align*}
Moreover, for all $q\in(0,1]$,
\begin{align}
\lambda_{\mathrm{RS}}(q)-q^6 &= \frac{(1-q)^6(q+1)^2\,(q^4-8q^3+30q^2-8q+1)}{1024}\ \ge\ 0,
\label{eq:rsdom1}\\
\lambda_{\mathrm{RS}}(q)-\mu_{\mathrm{RS}}(q)
&= \frac{(1-q)^4(5q^2+2q+5)\,(3q^6-30q^5+93q^4+124q^3+93q^2-30q+3)}{16384}\ \ge\ 0.
\label{eq:rsdom2}
\end{align}
In particular $\norm{\Wrs}=\lambda_{\mathrm{RS}}(q)$ for every $q\in(0,1]$.
\end{lemma}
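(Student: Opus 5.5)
The plan is to exploit the tensor structure $A_i=B_i\otimes B_i$ to block-diagonalize everything. Since $(X\otimes X)(Y\otimes Y)=XY\otimes XY$, we have
\[
P_\sigma=Q_\sigma\otimes Q_\sigma,\qquad Q_\sigma:=B_{\sigma(1)}B_{\sigma(2)}B_{\sigma(3)},
\qquad
P_\sigma^2=Q_\sigma^2\otimes Q_\sigma^2 .
\]
Every $P_\sigma$ therefore commutes with the swap operator on $\R^2\otimes\R^2$, and so do $R$ and $\Wss$. They preserve the $3$-dimensional symmetric subspace $\mathrm{Sym}^2\R^2$ and the $1$-dimensional antisymmetric subspace spanned by $e_1\otimes e_2-e_2\otimes e_1$. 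On the antisymmetric line, $X\otimes X$ acts by $\det X$. Since $\det B_i=q$, each $P_\sigma$ acts by $q^3$, so $R$ acts by $q^3$. Hence $\Wrs$ acts by $q^6$, and $\Wss$ acts by the average of $q^6$, which is again $q^6$. This accounts for the eigenvalue $q^6$ in both matrices.

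On $\mathrm{Sym}^2\R^2$, I will use the $\mathbb{Z}_3$ rotational symmetry of the configuration. Let $T$ be rotation by $120^\circ$. Then $Tu_1=\pm u_2$ and $Tu_2=\pm u_3$, so $TP_iT^{-1}=P_{i+1}$, with indices mod $3$. The reflection $F=\mathrm{diag}(1,-1)$ fixes $P_1$ and swaps $P_2$ and $P_3$. Conjugation by $T$ or by $F$ therefore permutes $\{B_i\}$, and thus permutes the set $\{Q_\sigma\}$ up to reversal/relabelling of $\sigma$. Since we average over all of $\mathfrak S_3$, the averages $R$ and $\Wss$ commute with $T\otimes T$ and $F\otimes F$.

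The representation $\mathrm{Sym}^2\R^2$ of the dihedral group $D_3$ splits as a trivial line, spanned by the identity $e_1\otimes e_1+e_2\otimes e_2$ (viewing symmetric tensors as symmetric $2\times 2$ matrices), plus the $2$-dimensional irreducible representation of traceless symmetric matrices. By Schur's lemma, $R$ and $\Wss$ act by scalars on each of these pieces. This gives eigenvalue multiplicities $(1,2,1)$, which matches the form of the lemma.

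To get the scalars explicitly, I would compute the following.
\begin{itemize}
\item The value on the trivial line. With $v=e_1\otimes e_1+e_2\otimes e_2$, we have $(X\otimes X)v=\mathrm{vec}(XX^\top)$. So the eigenvalue of $R$ on this line is $\frac16\sum_\sigma\tfrac12\mathrm{tr}(Q_\sigma Q_\sigma^\top)$, and for $\Wss$ it is $\frac16\sum_\sigma\tfrac12\mathrm{tr}(Q_\sigma^2(Q_\sigma^2)^\top)$.
\item The value on the $2$-dimensional piece. This is obtained from the trace: the full trace is $\frac16\sum_\sigma(\mathrm{tr}\,Q_\sigma)^2$ for $R$, and $\frac16\sum_\sigma(\mathrm{tr}\,Q_\sigma^2)^2$ for $\Wss$. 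Subtracting the trivial-line and antisymmetric contributions and halving gives the scalar.
\end{itemize}
These are finite computations with explicit $2\times2$ matrices over $\mathbb{Q}(\sqrt3)[q]$. By symmetry only two orderings need to be computed, e.g.\ $Q_{123}$ and $Q_{132}$, since the others are conjugates or transposes of these. Squaring the resulting $R$-eigenvalues gives $\lambda_{\mathrm{RS}}$ and $\mu_{\mathrm{RS}}$. I would have to identify which piece gives the larger value; the lemma's formulas suggest the trivial line gives $\lambda$.

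Finally, I check the factorizations \eqref{eq:rsdom1} and \eqref{eq:rsdom2} by expanding both sides as polynomials. Nonnegativity on $(0,1]$ follows from a few term-by-term arguments.
\begin{itemize}
\item In \eqref{eq:rsdom1}, write $q^4-8q^3+30q^2-8q+1=(q^2-4q+1)^2+12q^2\ge 0$.
\item In \eqref{eq:rsdom2}, the factor $5q^2+2q+5$ is positive.
\item For the sextic in \eqref{eq:rsdom2}, it is palindromic, so I write it in terms of $t=q+1/q$. It becomes $q^3(3t^3-30t^2+84t+184)$. On $t\ge2$ this is positive: the cubic's minimum occurs near $t\approx 4.9$, where its value is $\approx 3(117.6)-30(24)+84(4.9)+184>0$.
\end{itemize}
Since $\Wrs=R^2\succeq0$ by Lemma~\ref{lem:sym}, all its eigenvalues are nonnegative, so $\norm{\Wrs}=\max\{\lambda_{\mathrm{RS}},\mu_{\mathrm{RS}},q^6\}=\lambda_{\mathrm{RS}}$.

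The main obstacle is the bookkeeping in the explicit products $Q_\sigma$ and $Q_\sigma^2$, especially for $\Wss$, which involves degree-12 polynomials and $\sqrt3$ cancellations. I would rely on symbolic verification of each identity, as the paper does in Appendix~\ref{app:verify}. The representation-theoretic reduction guarantees that only these few scalar quantities need to be checked.
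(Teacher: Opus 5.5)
Your proposal is correct, and it reaches the lemma by a genuinely different route from the paper's. The paper forms the two $4\times4$ matrices symbolically and has a computer check that their characteristic polynomials factor as $(t-\lambda)(t-\mu)^2(t-q^6)$. Nothing in that argument explains why this pattern appears. You derive the pattern instead:
\begin{itemize}
\item $P_\sigma=Q_\sigma\otimes Q_\sigma$ commutes with the swap, and the antisymmetric line carries $\det Q_\sigma=q^3$. This gives $q^6$ for both $\Wrs$ and $\Wss$.
\item The $D_3$ symmetry splits $\mathrm{Sym}^2\R^2$ into the trivial line and the standard representation. The latter is absolutely irreducible and not isomorphic to the trivial or sign pieces, so Schur's lemma yields real scalars and the multiplicities $(1,2,1)$.
\end{itemize}

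Your reduction is stronger than you state. $D_3$ acts on the indices as all of $\mathfrak S_3$, so all six $Q_\sigma$ are orthogonally similar to $Q:=Q_{123}$; one matrix suffices, not two. Everything is then determined by
\[
\mathrm{tr}\,Q=\tfrac{-q^3+9q^2+9q-1}{8},\qquad \det Q=q^3,\qquad Q_{12}-Q_{21}=\tfrac{\sqrt3}{8}(1-q)^2(1+q).
\]
From these data:
\begin{itemize}
\item The trivial-line eigenvalue of $R$ is $\frac12\mathrm{tr}(QQ^\top)=q^3+\frac{(1-q)^6}{32}$. This is exactly $\sqrt{\lambda_{\mathrm{RS}}}$, which confirms your guess about which piece carries $\lambda$ and makes \eqref{eq:rsdom1} transparent.
\item The trivial-line eigenvalue of $\Wss$ is $\frac12\mathrm{tr}\bigl(Q^2(Q^2)^\top\bigr)=q^6+\frac{(1-q)^6(\mathrm{tr}\,Q)^2}{32}=\lambda_{\mathrm{SS}}$.
\end{itemize}
So your route makes the lemma checkable by hand and explains its structure. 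The paper's route is shorter to state but opaque and entirely computer-dependent.

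For the sextic, your palindromic substitution is a valid alternative to the paper's certificate $3q^4((q-5)^2+6)+124q^3+3(31q^2-10q+1)$, but your numerics are off. The cubic $f(t)=3t^3-30t^2+84t+184$ has $f'(t)=3(3t-14)(t-2)$. Its minimum on $t\ge2$ is therefore at $t=14/3$, not near $4.9$, and there $f(14/3)=2048/9>0$.

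A harmless slip: counterclockwise rotation by $120^\circ$ sends $u_1\mapsto u_3$, not $\pm u_2$. Either orientation induces a $3$-cycle on the $P_i$, so nothing downstream changes.
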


\begin{proof}
Form $\Wrs=R^2$ and $\Wss=\frac16\sum_{\sigma\in\mathfrak S_3}P_\sigma^2$ as $4\times4$ matrices from
$A_i=B_i\otimes B_i$ of~\eqref{eq:AB}. Each assertion of the lemma is an exact identity in $q$,
verified by the symbolic-algebra computation of Appendix~\ref{app:verify}: the characteristic
polynomials of $\Wrs$ and $\Wss$ factor with exactly the stated eigenvalues and multiplicities, and the
differences~\eqref{eq:rsdom1}--\eqref{eq:rsdom2} equal the displayed expressions. It remains to deduce
$\norm{\Wrs}=\lambda_{\mathrm{RS}}$. The cofactors in~\eqref{eq:rsdom1}--\eqref{eq:rsdom2} are positive
on $(0,1]$ by the explicit sum-of-squares certificates
\begin{gather*}
q^4-8q^3+30q^2-8q+1=(q^2-4q+1)^2+12q^2\\
5q^2+2q+5>0,\\
3q^6-30q^5+93q^4+124q^3+93q^2-30q+3\\
\quad=3q^4\bigl((q-5)^2+6\bigr)+124q^3+3\bigl(31q^2-10q+1\bigr)>0\quad(q\ge0),
\end{gather*}
and
$(1-q)^4,(1-q)^6,(q+1)^2,(q+1)^4\ge0$. Hence the right-hand sides of
\eqref{eq:rsdom1}--\eqref{eq:rsdom2} are nonnegative on $(0,1]$. Since $\Wrs=R^2\psdge0$ (Lemma~\ref{lem:sym}), its norm is
its largest eigenvalue, and these inequalities make $\lambda_{\mathrm{RS}}$ exceed both $q^6$ and
$\mu_{\mathrm{RS}}$; therefore $\norm{\Wrs}=\lambda_{\mathrm{RS}}$.
\end{proof}

\begin{lemma}[The violation]\label{lem:gap}
For all $q$,
\begin{equation}\label{eq:gap}
\lambda_{\mathrm{SS}}(q)-\lambda_{\mathrm{RS}}(q)
\;=\;
\frac{(1-q)^6(q+1)^2\bigl(42q^2-q^4-4q^3-4q-1\bigr)}{2048},
\end{equation}
the final factor $g(q):=42q^2-q^4-4q^3-4q-1$
has $g(q)>0$ for $q\in(q^\ast,1]$ for $q^{\ast} = 0.212036...$. Consequently
$\lambda_{\mathrm{SS}}(q)>\lambda_{\mathrm{RS}}(q)$ for every $q\in(q^\ast,1)$.
\end{lemma}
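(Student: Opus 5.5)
The argument has three steps: verify the identity \eqref{eq:gap}, locate the sign of $g$ on $(q^\ast,1]$, and combine.

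\textbf{The identity.} I would treat \eqref{eq:gap} as an exact polynomial identity and check it by expansion, using the explicit formulas of Lemma~\ref{lem:eig}. Expand $\lambda_{\mathrm{RS}}(q)$ by squaring the degree-$6$ palindromic polynomial $h(q)=q^6-6q^5+15q^4+12q^3+15q^2-6q+1$ and dividing by $1024$. Put $\lambda_{\mathrm{SS}}$ over the same denominator by doubling its numerator and dividing by $2048$. Subtracting gives a degree-$12$ palindromic polynomial over $2048$. It must then equal $(1-q)^6(q+1)^2g(q)$ coefficient by coefficient, which is a finite check delegated to the symbolic script of Appendix~\ref{app:verify}.

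Two cheap hand checks make the factorization plausible:
\begin{itemize}
\item At $q=1$: $h(1)=32$, so $\lambda_{\mathrm{RS}}(1)=1$, and the $\lambda_{\mathrm{SS}}$ numerator sums to $2048$, so $\lambda_{\mathrm{SS}}(1)=1$. Hence the difference vanishes at $q=1$.
\item The factor $(1-q)^6$ reflects that both $\Wss$ and $\Wrs$ agree with the commuting case to high order near $q=1$.
\end{itemize}

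\textbf{Sign of $g$.} Write $g(q)=-(q^4+4q^3-42q^2+4q+1)$. This is $-1$ times the palindromic quartic already appearing in $\mu_{\mathrm{RS}}$. Dividing by $q^2$ and setting $t=q+1/q$ gives
\[
q^{-2}(q^4+4q^3-42q^2+4q+1)=t^2+4t-44.
\]
Its roots are $t=-2\pm4\sqrt3$. The relevant one is $t_0=4\sqrt3-2\approx 4.928>2$.

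For $q\in(0,1]$, the map $q\mapsto q+1/q$ is strictly decreasing from $+\infty$ to $2$. Moreover $t^2+4t-44<0$ exactly when $t<t_0$ (for $t\ge 2$). So $g(q)>0$ precisely for $q\in(q^\ast,1]$, where $q^\ast$ is the root in $(0,1)$ of $q+1/q=t_0$. Explicitly,
\[
q^\ast=\frac{t_0-\sqrt{t_0^2-4}}{2}\approx 0.2120,
\]
matching the stated value. At $q=1$ this is consistent with $g(1)=32>0$.

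\textbf{Conclusion.} For $q\in(q^\ast,1)$ we have $(1-q)^6>0$, $(q+1)^2>0$ and $g(q)>0$, so the right side of \eqref{eq:gap} is strictly positive. Hence $\lambda_{\mathrm{SS}}(q)>\lambda_{\mathrm{RS}}(q)$.

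The only real obstacle is the expansion behind the identity, which is mechanical and is certified symbolically. Everything else reduces to the quadratic in $t$.
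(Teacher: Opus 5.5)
Your proof is correct. The identity is handled as in the paper, by exact expansion certified by the symbolic script. One small slip in your description: to put both eigenvalues over $2048$ you double the numerator $h(q)^2$ of $\lambda_{\mathrm{RS}}=h^2/1024$. You do not double the numerator of $\lambda_{\mathrm{SS}}$, which is already over $2048$.

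Where you genuinely differ is the sign analysis of $g$. The paper argues by convexity: $g''(q)=-12(q^2+2q-7)>0$ on $[0,1]$, together with $g(0)=-1<0<g(1)=32$, forces exactly one sign change. The location $q^\ast\approx 0.212036$ is then only reported numerically. You instead use that $-g$ is palindromic, so $g(q)=-q^2(t^2+4t-44)$ with $t=q+1/q$, reducing everything to a quadratic. Since $t$ decreases strictly from $+\infty$ to $2$ on $(0,1]$, you get an exact ``if and only if'' description of where $g>0$. You also get a closed form $q^\ast=\tfrac12\bigl(t_0-\sqrt{t_0^2-4}\bigr)=2\sqrt3-1-2\sqrt{3-\sqrt3}$ with $t_0=4\sqrt3-2$.

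The convexity route is quicker to state and needs no structural insight. Yours certifies the decimal value of $q^\ast$ claimed in the statement, which the paper leaves as a numerical assertion. It also shows that $q^\ast$ is exactly where the double eigenvalue $\mu_{\mathrm{RS}}$ of $\Wrs$ vanishes.
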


\begin{proof}
Subtracting the explicit eigenvalues $\lambda_{\mathrm{SS}}$ and $\lambda_{\mathrm{RS}}$ of
Lemma~\ref{lem:eig} and expanding yields the polynomial identity~\eqref{eq:gap}. We determine the sign
of its final factor $g(q)=42q^2-q^4-4q^3-4q-1$ on $(0,1]$. Since $g(0)=-1<0$ and $g(1)=32>0$, $g''(q)=-12(q^2+2q-7)>0$ on $[0,1]$, with $g(0)<0<g(1)$ this forces a single sign change, at some $q^\ast\in(0,1)$, numerically $q^\ast=0.212036\ldots$.
\end{proof}

\subsection{Proof of Theorem~\ref{thm:refute}}

Fix any $q\in(q^\ast,1)$. By Lemma~\ref{lem:eig}, $\norm{\Wrs}=\lambda_{\mathrm{RS}}(q)$.
Since $\Wss$ is symmetric (Lemma~\ref{lem:sym}) and $\lambda_{\mathrm{SS}}(q)$ is one of its eigenvalues, the spectral norm
satisfies $\norm{\Wss}\ge\lambda_{\mathrm{SS}}(q)$. Combining this with Lemma~\ref{lem:gap},
\[
\norm{\Wss}\;\ge\;\lambda_{\mathrm{SS}}(q)\;>\;\lambda_{\mathrm{RS}}(q)\;=\;\norm{\Wrs}.
\]
Now let $\eta_{3,2}\in(0,1]$ be arbitrary. Choosing $q\in(q^\ast,1)$ with
$1-q^2<\eta_{3,2}$ (possible since $q^\ast<1$), the matrices $A_i$ of~\eqref{eq:AB} are symmetric,
positive definite, satisfy $(1-\eta_{3,2})I\psdle A_i\psdle I$ by~\eqref{eq:cond}, and yet
$\norm{\Wss}>\norm{\Wrs}$. Therefore no $\eta_{3,2}$ validates the first inequality
of~\eqref{eq:chain}, so Conjecture~\ref{conj:main} fails at $(n,K)=(3,2)$.

\section{RS-GD inequality holds}\label{sec:rsgd}

We now prove Theorem~\ref{thm:rsgd}. This is the Recht--R\'e AM--GM inequality \cite{RechtRe12} (raised to the
$K$-th power), restricted to well-conditioned matrices. The unrestricted inequality is false for
$n\ge5$ \cite{LaiLim20,DeSa20}, but those counterexamples live far from $I$; the theorem shows that a
dimension-free conditioning radius $\eta=\frac1{4n^2+1}$ rules them out.

\subsection{Reduction to one epoch}

Since $R$ is symmetric (Section~\ref{sec:setup}) and $\Wrs=R^K$, $\Wgd=G^{nK}$ with $G\succ0$
symmetric, we have $\norm{\Wrs}=\norm{R}^K$ and $\norm{\Wgd}=\norm{G}^{nK}=\rho^{nK}$ where
$\rho:=\norm{G}=\lammax(G)$. Hence it suffices to prove the single-epoch bound
\begin{equation}\label{eq:oneepoch}
\norm{R}\;\le\;\rho^{\,n},
\end{equation}
which is independent of $K$; raising~\eqref{eq:oneepoch} to the $K$-th power (legitimate because
$R\psdge0$, established below) yields $\norm{\Wrs}=\norm{R}^K\le\rho^{nK}=\norm{\Wgd}$.

Because each $A_i$ has eigenvalues in $[1-\eta,1]$, so does $G$, hence $1-\eta\le\rho\le1$. Normalize
\[
C_i:=\rho^{-1}A_i=I+X_i,\qquad X_i:=\rho^{-1}A_i-I,\qquad \bar C:=\frac1n\sum_iC_i=\rho^{-1}G.
\]
Each $C_i\succ0$ and each $X_i$ is symmetric. Since $G\psdle\rho I$, we get $\bar C\psdle I$.
The eigenvalues of $X_i$ lie in $[\rho^{-1}(1-\eta)-1,\ \rho^{-1}-1]$; using $1-\eta\le\rho\le1$, the
upper end satisfies $\rho^{-1}-1\le\frac{1}{1-\eta}-1=\frac{\eta}{1-\eta}$ and the lower end satisfies
$|\rho^{-1}(1-\eta)-1|\le\eta\le\frac{\eta}{1-\eta}$, so $\norm{X_i}\le\frac{\eta}{1-\eta}$. With
$\eta=\frac1{4n^2+1}$,
\begin{equation}\label{eq:delta}
\delta:=\max_i\norm{X_i}\le\frac{\eta}{1-\eta}=\frac1{4n^2}.
\end{equation}
Finally $\widetilde R:=\frac1{n!}\sum_\sigma\prod_iC_{\sigma(i)}=\rho^{-n}R$, so~\eqref{eq:oneepoch}
is equivalent to $\widetilde R\psdle I$ together with $\widetilde R\psdge0$. Both are supplied by the
following lemma, applied to $C_i=\rho^{-1}A_i$.

\subsection{A near-identity shuffled AM--GM lemma}

\begin{lemma}[Near-identity shuffled product]\label{lem:amgm}
Let $X_1,\dots,X_n$ be symmetric $d\times d$ matrices with $C_i:=I+X_i\psdge0$ and
$\frac1n\sum_iC_i\psdle I$, and suppose $\delta:=\max_i\norm{X_i}\le\frac1{4n^2}$. Then
\[
0\;\psdle\;\widetilde R:=\frac1{n!}\sum_{\sigma\in\Sn}\prod_{i=1}^n C_{\sigma(i)}\;\psdle\;I.
\]
\end{lemma}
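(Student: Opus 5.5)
Expand the symmetrized product in elementary symmetric "words" in the $X_i$. For each subset $S\subseteq[n]$ of size $k$, the permutation average produces the symmetrized product $\mathrm{Sym}(X_S):=\frac1{k!}\sum_{\tau}X_{\tau(1)}\cdots X_{\tau(k)}$ over orderings of $S$. This is because in a uniformly random $\sigma$ the relative order of the indices in $S$ is uniform. Hence
\[
\widetilde R=\sum_{k=0}^n\sum_{|S|=k}\mathrm{Sym}(X_S)
=I+\sum_i X_i+\sum_{i<j}\tfrac12(X_iX_j+X_jX_i)+E,
\]
where $E$ collects the terms with $k\ge3$. The key comparison is with $\bar C^{\,n}=(I+\bar X)^n$, where $\bar X:=\frac1n\sum_iX_i\psdle 0$ because $\bar C\psdle I$. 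Let $s:=\sum_iX_i=n\bar X$. Then
\[
\sum_{i<j}\tfrac12(X_iX_j+X_jX_i)=\tfrac12\Bigl(s^2-\sum_iX_i^2\Bigr),
\]
so the quadratic term is $\frac12 s^2-\frac12\sum_i X_i^2$. The term $-\frac12\sum_iX_i^2\psdle0$ is helpful, and $\frac12 s^2$ is what must be absorbed.

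\textbf{Upper bound.} Write
\[
\widetilde R=I+s+\tfrac12 s^2-\tfrac12\sum_iX_i^2+E .
\]
Since $s\psdle0$ and $\norm{s}\le n\delta\le\frac1{4n}\le\frac18$, the function $t\mapsto 1+t+\frac12t^2$ is at most $1+\frac{t}{2}$ on $[-\frac18,0]$. By the functional calculus, $I+s+\frac12 s^2\psdle I+\frac12 s\psdle I$. The remaining danger is $E$. Its norm is bounded by
\[
\norm{E}\le\sum_{k\ge3}\binom nk\delta^k\le (e^{n\delta}-1-n\delta-\tfrac12 n^2\delta^2)\lesssim (n\delta)^3 .
\]
This cubic error is \emph{not} dominated by $\frac12 s$ alone when $s$ is near $0$. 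This is the main obstacle, because directions where $s\approx0$ but the $X_i$ are individually of size $\delta$ occur.

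\textbf{Handling $E$.} I would use the favorable term $-\frac12\sum_iX_i^2$ together with $\frac12 s$. The first step is to bound $E$ in Loewner order by a quantity controlled by $\delta\sum_iX_i^2$ and by $s$. Each word of length $k\ge3$ contains at least two letters. A symmetrized word $\mathrm{Sym}(X_S)$ can be bounded via $\pm(ZW+W^\top Z^\top)\psdle ZZ^\top+W^\top W$ (Cauchy--Schwarz in Loewner form), with $Z$ the first letter and $W$ the rest of the word. This splits each word into pieces like $X_a^2$ (weighted by $\delta^{k-2}$) and square terms in the remaining letters, which are again bounded by squares of single letters times powers of $\delta$. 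Summing over $S$ should give
\[
\pm E\psdle c\,n\delta\sum_iX_i^2
\]
up to terms involving $s$, with a constant $c$ such as $n\delta\cdot e^{n\delta}\le \frac14\cdot 2$. The condition $\delta\le\frac1{4n^2}$, i.e.\ $n^2\delta\le\frac14$, gives room here; the combinatorial counting $\binom{n}{k}$ against $\delta^{k-2}$ is where $n^2\delta$ enters. It then suffices that $c\,n\delta\le\frac12$, which yields $\widetilde R\psdle I+\frac12 s\psdle I$. The delicate bookkeeping is making the Cauchy--Schwarz splitting produce only squares $X_i^2$ (not $s^2$ terms with bad sign). If that fails, I would instead peel products recursively, writing $\prod C=I+\sum(\text{prefix})X_j(\text{suffix})$.

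\textbf{Lower bound.} This is easy. Every term except $I$ has norm at most $\sum_{k\ge1}\binom nk\delta^k=(1+\delta)^n-1\le e^{1/(4n)}-1<1$. Hence $\widetilde R\psdge(2-(1+\delta)^n)I\psdge0$, using the symmetry of $\widetilde R$ from Lemma~\ref{lem:sym}.
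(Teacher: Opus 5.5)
Your proposal is correct and is essentially the paper's proof. It uses the same expansion into symmetrized words, the same treatment of $I+s+\frac12 s^2$ (the paper uses $T^2\psdle\norm{T}\,T$ with $T=-s$ where you use functional calculus), the same Cauchy--Schwarz split of each word of length $k\ge3$ at its two end letters absorbed by $-\frac12\sum_iX_i^2$, and the same norm-series lower bound.

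One correction to the key step: to get the $\delta^{k-2}$ weighting you describe, the split must be the balanced form $\pm(ZW+W^\top Z^\top)\psdle\delta^{k-2}ZZ^\top+\delta^{-(k-2)}W^\top W$; the unweighted inequality as you wrote it leaves $X_a^2$ with no factor of $\delta$, and that is not small after summing. The balanced form gives exactly $\pm E_k\psdle\frac1k\binom{n-1}{k-1}\delta^{k-2}\sum_iX_i^2$ for the degree-$k$ part $E_k$ of $E$, with total coefficient below $\frac14$ and no $s$-terms, so your fallback is not needed.
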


\begin{proof}
\emph{Symmetry and expansion.} As $\Sn$ is closed under reversal and reversing a product transposes
it, $\widetilde R^\top=\widetilde R$. Expanding $\prod_i(I+X_{\sigma(i)})$ and averaging over $\sigma$,
\begin{equation}\label{eq:expand}
\widetilde R=I+\sum_{k=1}^n E_k,\qquad
E_k=\frac1{k!}\sum_{\substack{(i_1,\dots,i_k)\\ \text{distinct}}}X_{i_1}X_{i_2}\cdots X_{i_k}.
\end{equation}
Indeed, for fixed $k$ each size-$k$ subset of the $n$ factor slots contributes, after averaging,
the same term $\frac{(n-k)!}{n!}\sum_{\text{distinct}}X_{i_1}\cdots X_{i_k}$ (sum over distinct
ordered $k$-tuples); there are $\binom nk$ such subsets, and $\binom nk\frac{(n-k)!}{n!}=\frac1{k!}$,
giving~\eqref{eq:expand}.

\emph{Low-order terms.} Put
\[
T:=-\sum_iX_i,\qquad Q:=\sum_iX_i^2\psdge0.
\]
The hypothesis $\frac1n\sum_iC_i=I+\frac1n\sum_iX_i\psdle I$ is exactly $\sum_iX_i\psdle0$, i.e.\
$T\psdge0$. From~\eqref{eq:expand},
\[
E_1=\sum_iX_i=-T,\qquad
E_2=\frac12\sum_{i\ne j}X_iX_j=\frac12\Bigl[\bigl(\textstyle\sum_iX_i\bigr)^2-\sum_iX_i^2\Bigr]
=\frac12\bigl(T^2-Q\bigr).
\]
Hence
\begin{equation}\label{eq:IR}
I-\widetilde R=T+\frac12\bigl(Q-T^2\bigr)-\sum_{k=3}^n E_k.
\end{equation}

\emph{Upper bound $\widetilde R\psdle I$.} Fix a unit vector $v$ and write $t:=v^\top Tv\ge0$ and
$q:=v^\top Qv=\sum_i\norm{X_iv}^2\ge0$. Since $T\psdge0$ and $\norm{T}\le\sum_i\norm{X_i}\le n\delta$,
the operator inequality $T^2\psdle\norm{T}\,T$ gives $v^\top T^2v\le n\delta\,t$, so
\begin{equation}\label{eq:firstpart}
v^\top\Bigl(T-\frac12T^2\Bigr)v\;\ge\;\Bigl(1-\frac{n\delta}2\Bigr)t\;\ge\;0,
\end{equation}
using $n\delta\le\frac1{4n}\le\frac12$. For the higher-order terms, each word with $k\ge3$ obeys,
by Cauchy--Schwarz on the two end factors and $\norm{X_{i_j}}\le\delta$ on the middle ones,
\[
\bigl|v^\top X_{i_1}\cdots X_{i_k}v\bigr|
\le\norm{X_{i_1}v}\,\delta^{\,k-2}\,\norm{X_{i_k}v}
\le\frac{\delta^{\,k-2}}2\bigl(\norm{X_{i_1}v}^2+\norm{X_{i_k}v}^2\bigr).
\]
Summing over distinct ordered $k$-tuples: for each fixed first index there are
$(n-1)(n-2)\cdots(n-k+1)=:(n-1)_{k-1}$ choices of the rest, and likewise for the last index, so
$\sum_{\text{tuples}}\norm{X_{i_1}v}^2=(n-1)_{k-1}\,q$ and the same for the last factor. Therefore
\begin{equation}\label{eq:Ekbound}
\bigl|v^\top E_kv\bigr|\le\frac{(n-1)_{k-1}}{k!}\,\delta^{\,k-2}\,q,
\qquad
\sum_{k=3}^n\bigl|v^\top E_kv\bigr|\le q\sum_{k=3}^n\frac{(n-1)_{k-1}}{k!}\,\delta^{\,k-2}.
\end{equation}
Using $(n-1)_{k-1}\le n^{\,k-1}$ and $\delta\le\frac1{4n^2}$, the $k$-th summand is at most
$\frac1{k!}n^{k-1}\delta^{k-2}=\frac1{k!}\,n\,(n\delta)^{k-2}\le\frac1{k!}\,n\,(4n)^{-(k-2)}$; the
$k=3$ term is $\le\frac16\cdot n\cdot\frac1{4n}=\frac1{24}$ and the series is dominated by a
geometric tail, giving $\sum_{k\ge3}\frac{(n-1)_{k-1}}{k!}\delta^{k-2}<\frac14$ for all $n\ge2$
(the supremum over $n$ is $\frac1{24}+o(1)<\frac14$). Combining~\eqref{eq:IR},
\eqref{eq:firstpart}, and~\eqref{eq:Ekbound},
\[
v^\top(I-\widetilde R)v\;\ge\;\Bigl(1-\frac{n\delta}2\Bigr)t+\frac12q-\frac14q\;\ge\;\frac14q\;\ge\;0,
\]
so $\widetilde R\psdle I$.

\emph{Lower bound $\widetilde R\psdge0$.} From~\eqref{eq:expand}, each $E_k$ is an average of at most
$\frac{n!}{(n-k)!}$ words of norm $\le\delta^k$, scaled by $\frac1{k!}$, so
$\norm{E_k}\le\binom nk\delta^k$ and
\[
\norm{\widetilde R-I}\le\sum_{k=1}^n\binom nk\delta^k=(1+\delta)^n-1\le e^{n\delta}-1\le
e^{1/(4n)}-1<1.
\]
Thus every eigenvalue of $\widetilde R$ exceeds $1-(e^{1/(4n)}-1)=2-e^{1/(4n)}>0$, so
$\widetilde R\succ0$. Combined with the upper bound, $0\psdle\widetilde R\psdle I$.
\end{proof}

\subsection{Proof of Theorem~\ref{thm:rsgd}}

Apply Lemma~\ref{lem:amgm} to $C_i=\rho^{-1}A_i=I+X_i$: the hypotheses hold by the reduction above
($C_i\succ0$, $\bar C\psdle I$, and $\delta\le\frac1{4n^2}$ by~\eqref{eq:delta}). Since
$\widetilde R=\rho^{-n}R$, the lemma gives $0\psdle R\psdle\rho^nI$, hence $R\psdge0$ and
$\norm{R}\le\rho^n$, which is~\eqref{eq:oneepoch}. As $G\psdge0$, $\norm{G^n}=\rho^n$, so
$\norm{R}\le\norm{G^n}$. Finally, because $R\psdge0$,
\[
\norm{\Wrs}=\norm{R^K}=\norm{R}^K\le\rho^{nK}=\norm{G^{nK}}=\norm{\Wgd}.
\]
This holds for every $n\ge2$, $K\ge1$, $d\ge1$, and symmetric $A_i$ with
$\bigl(1-\frac1{4n^2+1}\bigr)I\psdle A_i\psdle I$. \qed

\begin{remark}[On the constant]
The constant $\eta^{\mathrm{RS\text{-}GD}}_{n,K}=\frac1{4n^2+1}=\Theta(1/n^2)$ is not optimized; the
proof only uses $\delta\le\frac1{4n^2}$ with slack (the tail in~\eqref{eq:Ekbound} is $<\frac1{24}$,
not just $<\frac14$). Notably it is independent of the epoch count $K$, matching the paper's intuition
that the reshuffling-versus-GD comparison should need only a mild, $K$-free conditioning. By contrast,
Theorem~\ref{thm:refute} shows that for the single-shuffle inequality \emph{no} positive constant
works at $(n,K)=(3,2)$, matrix-free.
\end{remark}

\newpage

\appendix
\section{Reproducible symbolic verification}\label{app:verify}

All claims of Section~\ref{sec:refute} are exact polynomial identities in $q$. The following
self-contained \texttt{sympy} script forms the matrices of the
construction~\eqref{eq:proj}--\eqref{eq:AB} and verifies them; it runs to completion, printing
\texttt{all identities verified} with no failed assertion.

{\footnotesize
\begin{verbatim}
import sympy as sp, itertools
q, t = sp.symbols('q t')
s3 = sp.sqrt(3)
P1 = sp.Matrix([[1,0],[0,0]])
P2 = sp.Matrix([[sp.Rational(1,4), s3/4],[s3/4, sp.Rational(3,4)]])
P3 = sp.Matrix([[sp.Rational(1,4),-s3/4],[-s3/4, sp.Rational(3,4)]])
B  = [q*sp.eye(2) + (1-q)*P for P in (P1, P2, P3)]
A  = [sp.Matrix(sp.kronecker_product(b, b)) for b in B]
prm = list(itertools.permutations(range(3)))
R   = sum((A[a]*A[b]*A[c] for a,b,c in prm), sp.zeros(4)) / 6
Wrs = sp.expand(R*R)
Wss = sp.expand(sum(((A[a]*A[b]*A[c])**2 for a,b,c in prm),
                    sp.zeros(4)) / 6)
assert sp.expand(Wrs - Wrs.T) == sp.zeros(4)   # W_RS symmetric
assert sp.expand(Wss - Wss.T) == sp.zeros(4)   # W_SS symmetric

# eigenvalue polynomials claimed in Lemma 5
lamRS = ((q**6-6*q**5+15*q**4+12*q**3+15*q**2-6*q+1)/32)**2
muRS  = (q+1)**4*(q**4+4*q**3-42*q**2+4*q+1)**2/16384
lamSS = (q**12-24*q**11+186*q**10-504*q**9+399*q**8+528*q**7
         +876*q**6+528*q**5+399*q**4-504*q**3+186*q**2
         -24*q+1)/2048
muSS  = -(q**12-12*q**11-78*q**10+1188*q**9-2001*q**8
          -1176*q**7-4036*q**6-1176*q**5-2001*q**4+1188*q**3
          -78*q**2-12*q+1)/8192

# characteristic polynomials factor with these roots
cRS = (t-lamRS)*(t-muRS)**2*(t-q**6)
cSS = (t-lamSS)*(t-muSS)**2*(t-q**6)
assert sp.factor(Wrs.charpoly(t).as_expr() - cRS) == 0
assert sp.factor(Wss.charpoly(t).as_expr() - cSS) == 0

# gap identity (Lemma 6) and dominance identities (Lemma 5)
g = 42*q**2-q**4-4*q**3-4*q-1
def zero(e): return sp.expand(e) == 0
assert zero(lamSS-lamRS - (1-q)**6*(q+1)**2*g/2048)
assert zero(lamRS-q**6
            - (1-q)**6*(q+1)**2*(q**4-8*q**3+30*q**2-8*q+1)/1024)
assert zero(lamRS-muRS - (1-q)**4*(5*q**2+2*q+5)
            *(3*q**6-30*q**5+93*q**4+124*q**3+93*q**2-30*q+3)/16384)

# sum-of-squares certificates from the proof of Lemma 5:
# positivity of the cofactors above, whence ||W_RS|| = lamRS
assert zero(q**4-8*q**3+30*q**2-8*q+1
            - ((q**2-4*q+1)**2 + 12*q**2))
assert zero(3*q**6-30*q**5+93*q**4+124*q**3+93*q**2-30*q+3
            - (3*q**4*((q-5)**2+6) + 124*q**3
               + 3*(31*q**2-10*q+1)))
print("all identities verified")
\end{verbatim}
}

\noindent The script confirms that $\Wrs$ and $\Wss$ are symmetric; that their characteristic
polynomials factor as $(t-\lambda_{\mathrm{RS}})(t-\mu_{\mathrm{RS}})^2(t-q^6)$ and
$(t-\lambda_{\mathrm{SS}})(t-\mu_{\mathrm{SS}})^2(t-q^6)$, giving the eigenvalue lists of
Lemma~\ref{lem:eig}; that the gap identity~\eqref{eq:gap} and the dominance
identities~\eqref{eq:rsdom1}--\eqref{eq:rsdom2} hold; and that the two sum-of-squares
certificates in the proof of Lemma~\ref{lem:eig} (which yield $\norm{\Wrs}=\lambda_{\mathrm{RS}}$)
are exact. Being exact symbolic identities in $q$, these are proofs of the corresponding statements.


\begin{thebibliography}{10}

\bibitem{YSJ21open}
C.~Yun, S.~Sra, and A.~Jadbabaie.
\newblock Open problem: Can single-shuffle SGD be better than reshuffling SGD and GD?
\newblock In \emph{Conference on Learning Theory (COLT)}, vol.\ 134, pp.\ 4653--4658, 2021.

\bibitem{RechtRe12}
B.~Recht and C.~R\'e.
\newblock Toward a noncommutative arithmetic-geometric mean inequality: conjectures, case-studies, and
consequences.
\newblock In \emph{Conference on Learning Theory (COLT)}, pp.\ 11.1--11.24, 2012.

\bibitem{LaiLim20}
Z.~Lai and L.-H.~Lim.
\newblock Recht--R\'e noncommutative arithmetic-geometric mean conjecture is false.
\newblock In \emph{International Conference on Machine Learning (ICML)}, 2020.

\bibitem{DeSa20}
C.~De~Sa.
\newblock Random reshuffling is not always better.
\newblock In \emph{Advances in Neural Information Processing Systems (NeurIPS)}, vol.\ 33, 2020.

\bibitem{Zhang18}
T.~Zhang.
\newblock A note on the matrix arithmetic-geometric mean inequality.
\newblock \emph{Electronic Journal of Linear Algebra}, 34:283--287, 2018.

\bibitem{HaochenSra19}
J.~Haochen and S.~Sra.
\newblock Random shuffling beats SGD after finite epochs.
\newblock In \emph{International Conference on Machine Learning (ICML)}, pp.\ 2624--2633, 2019.

\bibitem{Nagaraj19}
D.~Nagaraj, P.~Jain, and P.~Netrapalli.
\newblock SGD without replacement: Sharper rates for general smooth convex functions.
\newblock In \emph{International Conference on Machine Learning (ICML)}, pp.\ 4703--4711, 2019.

\bibitem{Rajput20}
S.~Rajput, A.~Gupta, and D.~Papailiopoulos.
\newblock Closing the convergence gap of SGD without replacement.
\newblock In \emph{International Conference on Machine Learning (ICML)}, 2020.

\bibitem{Ahn20}
K.~Ahn, C.~Yun, and S.~Sra.
\newblock SGD with shuffling: optimal rates without component convexity and large epoch requirements.
\newblock In \emph{Advances in Neural Information Processing Systems (NeurIPS)}, vol.\ 33, 2020.

\bibitem{Mishchenko20}
K.~Mishchenko, A.~Khaled, and P.~Richt\'arik.
\newblock Random reshuffling: Simple analysis with vast improvements.
\newblock \emph{arXiv preprint arXiv:2006.05988}, 2020.

\bibitem{SafranShamir20}
I.~Safran and O.~Shamir.
\newblock How good is SGD with random shuffling?
\newblock In \emph{Conference on Learning Theory (COLT)}, pp.\ 3250--3284, 2020.

\end{thebibliography}
\end{document}